\newlist{algolist}{enumerate}{3}
\setlist[algolist,1]{label=(\arabic*), ref=(\arabic*)}
\setlist[algolist,2]{label=(\alph*), ref=(\arabic{algolisti})-(\alph*)}
\setlist[algolist,3]
{label=(\roman*), ref=(\arabic{algolisti})-(\alph{algolistii})-(\roman*)}
\numberwithin{equation}{section}  
\theoremstyle{plain}
\newtheorem{theorem}{Theorem}[section]
\newtheorem{corollary}[theorem]{Corollary}
\theoremstyle{definition}
\newtheorem{definition}[theorem]{Definition}
\theoremstyle{remark}
\crefname{algolisti}{Step}{Steps}
\crefname{algolistii}{Step}{Steps}
\crefname{algolistiii}{Step}{Steps}
\crefname{algolisti}{step}{steps}
\crefname{algolistii}{step}{steps}
\crefname{algolistiii}{step}{steps}
\DeclarePairedDelimiter\abs{\lvert}{\rvert} 
\DeclareMathOperator*{\arginf}{arg\,inf} 
\providecommand{\N}{\mathbb{N}} 
\providecommand{\R}{\mathbb{R}} 
\DeclarePairedDelimiter\set{\{}{\}}
\DeclarePairedDelimiterX\setc[2]{\{}{\}}{\,#1 \;\delimsize\vert\; #2\,}
\providecommand{\vc}[1]{\mathbf{#1}}
\begin{document}
\title{Uniqueness of optimal solutions for semi-discrete transport with 
$\bm{p}$-norm cost functions}
\author{J.D.\ Walsh III$^\dagger$}
\address{$^\dagger$School of Mathematics \\ Georgia Techn \\
Atlanta, GA 30332 U.S.A.\\Tel.: +1 404-894-4401 Fax: +1 404-894-4409}
\email{jdwalsh03@gatech.edu}
\thanks{{\bf Acknowledgments}. 
This material is based upon work supported by the National Science Foundation 
Graduate Research Fellowship Program under Grant No. DGE-1650044. Any 
opinions, findings, and conclusions or recommendations expressed in this 
material are those of the authors and do not necessarily reflect the views of 
the National Science Foundation.}

\subjclass{65K10 \and 90C08}

\keywords{Optimal transport \and Monge-Kantorovich \and semi-discrete \and
Wasserstein distance}

\begin{abstract}
Semi-discrete transport can be characterized in terms of real-valued shifts. 
Often, but not always, the solution to the shift-characterized problem 
partitions the continuous region. This paper gives examples of 
when partitioning fails, and offers a large class of semi-discrete transport 
problems where the shift-characterized solution is always a partition.
\end{abstract}

\maketitle

\pagestyle{myheadings}
\thispagestyle{plain}
\markboth{J.D.\ Walsh III}{Uniqueness of optimal solutions for semi-discrete 
transport}

\section{Introduction}
Optimal transport offers a way to measure the distance between two probability 
spaces, $X$ and $Y$.
In the class of transport problems known as semi-discrete optimal transport, 
the probability distribution on $X$ is almost-everywhere continuous and the 
probability distribution on $Y$ is discrete, with $N$ points of positive 
measure.
Given minimal assumptions, described below, the semi-discrete problem always 
has at least one solution that partitions $X$ into $N$ regions based on 
transport destination.

R{\"u}schendorf and Uckelmann developed a way to characterize semi-discrete 
transport in terms of a set of real-valued shifts.
This shift characterization often results in a solution that partitions $X$ 
into $N$ regions.
Unfortunately, the shift characterization does not always partition $X$.
This important fact has not always been recognized or clearly expressed in the 
literature; see~\cite{Ruschendorf2007a,Ruschendorf1997a,Ruschendorf2000a}.
To remedy that ambiguity, this paper gives clear, specific examples where 
shift-characterized partitioning fails, and it offers a large class of problems 
where the shift characterization is guaranteed to partition $X$.

\section{Background}
\subsection{General optimal transport: the Monge-Kantorovich and Monge 
problems}\label{s:transport_problem}
Though this paper focuses on the semi-discrete problem, it is worth
describing it in terms of the more general, Monge-Kantorovich transport problem.

\begin{definition}[Monge-Kantorovich problem]\label{MKproblem}
Let $X,\,Y \subseteq \R^d$, let $\mu$ and $\nu$ be probability densities 
defined on $X$ and $Y$, and let $c(\vc{x},\,\vc{y}) : X \times Y \to \R$ be a 
continuous measurable 
\emph{ground cost} function.
Define the set of \emph{transport plans}
\begin{equation}\label{MK-1}
\Pi(\mu,\,\nu) := \set*{\pi \in \mathcal{P}(X \times Y) \left|
\begin{array}{c}
\pi[A \times Y] = \mu[A],\,
 \pi[X \times B] = \nu[B] \ ,\\
 \forall \text{ meas.\ }
 A \subseteq X,\, B \subseteq Y
\end{array}
\right. },
\end{equation}
where $\mathcal{P}(X \times Y)$ is the set of probability measures on the 
product space,
and define the \emph{primal cost} function $P: \Pi(\mu,\,\nu) \to \R$ as
\begin{equation}\label{primal_cost}
P(\pi) := \int_{X \times Y} c(\vc{x},\,\vc{y})\, d\pi(\vc{x},\,\vc{y}).
\end{equation}
The Monge-Kantorovich problem is to
find the \emph{optimal primal cost}
\begin{equation}\label{MK-2}
P^* := \inf_{\pi \in \Pi(\mu,\,\nu)}\, P(\pi),
\end{equation}
and an associated \emph{optimal transport plan}
\begin{equation}\label{MK-pi-star}
\pi^* := \arginf_{\pi \in \Pi(\mu,\,\nu)}\, P(\pi).
\end{equation}
\end{definition}

Under the conditions given, an optimal transport plan, $\pi^*$, is guaranteed 
to exist.
However, $\pi^*$ may not be unique, or even 
a.e.-unique.
Furthermore, the existence of $\pi^*$, an optimal \emph{plan}, does not ensure 
that $\pi^*$ is a \emph{map}, or that an optimal map exists.
Nonetheless, consider the form such an optimal map would take.

\begin{definition}[Monge problem]\label{Mproblem}
In certain cases, there exists at least one solution to the semi-discrete 
Monge-Kantorovich problem that does not split transported masses.
In other words, there exists some $\pi^*$ such that
\begin{equation}\label{Tstar}
\pi^*(\vc{x},\,\vc{y}) = \pi^*_{\scriptscriptstyle{T^*}}(\vc{x},\,\vc{y}) :=
\mu(\vc{x})\,\chi[\vc{y} = T^*(\vc{x})],
\end{equation}
where $T^* : X \to Y$ is a measurable map called the \emph{optimal
transport map}.
When such a $\pi^*$ exists, we say the solution also solves the Monge 
problem.

If the Monge problem has a solution, we can assume without loss of 
generality that every $\pi \in \Pi(\mu,\,\nu)$ satisfies
\begin{equation}\label{Tplan}
\pi(\vc{x},\,\vc{y}) = \pi_{\scriptscriptstyle{T}}(\vc{x},\,\vc{y}) :=
\mu(\vc{x})\,\chi[\vc{y} = T(\vc{x})],
\end{equation}
for some measurable transport map $T : X \to Y$, and that the primal cost can 
be written
\begin{equation}\label{Mprimal_cost}
P(\pi) := \int_{X} c(\vc{x},\,T(\vc{x}))\, d\mu(\vc{x}).
\end{equation}
\end{definition}

\subsection{Semi-discrete optimal transport and the 
shift characterization}\label{s:semi-discrete}
The semi-discrete optimal transport problem is the Monge-Kantorovich
problem of \cref{MKproblem}, with restrictions on $\mu$ and $\nu$:
\begin{enumerate}
\item
Assume that $\mu$ satisfies the following:
\begin{enumerate}
\item
$\mu$ is bounded.
\item
$\mu$ is nonatomic.
\item
$\mu$ is continuous except on a set of Lebesgue measure zero.
\item
The support of $\mu$ is contained in the convex compact region $A \subseteq X$.
\end{enumerate}
\item
Assume $\nu$ has exactly $n \geq 2$ non-zero values, located at 
$\set{\vc{y}_i}_{i=1}^n 
\subseteq Y$.
\end{enumerate}
Because $c$ is continuous and $\mu$ is nonatomic, at least 
one solution to the semi-discrete Monge-Kantorovich problem also satisfies 
the Monge problem, described in \cref{Mproblem}; see~\cite{Rachev1998a}.
Thus, by applying \cref{Tplan}, we can assume without loss of generality that 
any transport plan $\pi$ has an associated map $T$, and that $T$ partitions $A$ 
into $n$ sets $A_i$, 
where $A_i$ is the set of points in $A$ that are transported by $T$ to 
$\vc{y}_i$.
Using this partitioning scheme in combination with \cref{Mprimal_cost} allows 
us to rewrite the primal cost function for the semi-discrete problem as
\begin{equation}\label{primal_cost_sd}
P(\pi) := \sum_{i=1}^n \int_{A_i} c(\vc{x},\,\vc{y}_i)\, 
d\mu(\vc{x}).
\end{equation}

This idea of sets $A_i$ is central to describing the shift 
characterization of the semi-discrete optimal transport problem.
The following definition is based on one given by 
R\"{u}schendorf and Uckelmann in~\cite{Ruschendorf2007a,Ruschendorf2000a}.

\begin{definition}[Shift characterization]\label{ShiftChar}
Let $\set{a_i}_{i=1}^n$ be a set of $n$ finite values, referred 
to as \emph{shifts}.
Define
\begin{equation}\label{Fdef}
F(\vc{x}) := \max_{1\leq i\leq n} \set{a_i - c(\vc{x},\,\vc{y}_i)}.
\end{equation}
For $i \in \N_n$, where $\N_n = \set{1,\,\ldots,\,n}$, let
\begin{equation}\label{Aidef}
A_i  := \set{\vc{x} \in A \mid F(\vc{x}) = a_i - c(\vc{x},\,\vc{y}_i)}.
\end{equation}
Note that $\cup_{i=1}^nA_i=A$.
The problem of determining an optimal transport plan $\pi^*$ is equivalent to 
determining shifts $\set{a_i}_{i=1}^n$ such that for all $i \in 
\N_n$, the total mass transported from $A_i$ to $\vc{y}_i$ equals 
$\nu(\vc{y}_i)$.
\end{definition}

\subsection{Formalizing the shift-characterized partition}
``Partitioning'' $A$ is described in~\cite{Ruschendorf2000a,Ruschendorf2007a} 
as $\mu(A_i) = \nu(\vc{y}_i)$. However, it is beneficial to describe the 
shift-characterized partition in more detail.
Doing so requires a few additional definitions.

\begin{definition}[Boundaries and boundary sets]
For all $i,\,j \in \N_n$ such that $i \neq j$, let
\begin{equation}\label{Aij}
A_{ij} := A_i \cap A_j.
\end{equation}
The \emph{boundary set} is defined as
\begin{equation}\label{bdryset}
B := \bigcup_{1 \leq i < n}\, \bigcup_{i < j \leq n} A_{ij}.
\end{equation}
\end{definition}
For all $i,\,j \in \N_n$ such that $i \neq j$, define
$g_{ij} : X \to \R$ as
\begin{equation}\label{gij}
g_{ij}(\vc{x}) := c(\vc{x},\,\vc{y}_i) - c(\vc{x},\,\vc{y}_j).
\end{equation}

\begin{definition}[$F$ $\mu$-partitions $A$]\label{d:part_A}
Let $F$ be as defined in \cref{Fdef}, and the sets $A_i$ as defined in 
\cref{Aidef} for $i \in \N_n$. Then one says $F$ \emph{$\mu$-partitions} the 
set $A$, or $F$ is called a \emph{$\mu$-partition}, if
\begin{enumerate}
\item $\mu(A) < \infty$,
\item for all $i,\,j \in \N_n$, $i\neq j$, $\mu(A_{ij})=0$,
\item
$\sum_{i=1}^n \mu(A_i) = \mu(A)$, and
\item for all $i \in \N_n$, $\mu(A_i) = \nu(\vc{y}_i) > 0$.
\end{enumerate}
\end{definition}

\begin{definition}[Monge under the shift characterization]
We say a transport plan $\pi$ is \emph{Monge under the shift 
characterization} if $\pi$ has an associated transport map $T$, 
a function $F$, as described in \cref{Fdef}, 
and sets $\set{A_i}_{i=1}^n$, as described in \cref{Aidef}, such that for all 
$\vc{x} \in A$,
\begin{equation}
\vc{x} \in \mathring{A}_i \text{ for some } i \in \N_n
\quad\quad \implies \quad\quad
T(\vc{x}) = \vc{y}_i.
\end{equation}
In other words,
$F$ $\mu$-partitions $A$ and
$T$ agrees with $F$ on $A \setminus B$.
\end{definition}

If $\mu(B) > 0$ for the shifts $\set{a_i}_{i=1}^n$, no such transport plan 
$\pi$ can exist, and the transport 
problem itself can be said to be \emph{not Monge under the shift 
characterization}.
Conversely, if $\mu(B) = 0$, then such a transport plan exists, and so the 
transport problem itself is said to be Monge under the shift characterization.
In other words, $F$ $\mu$-partitions $A$ if and only if the transport problem 
is Monge under the shift characterization.

The following result, from~\cite{Dieci2017a}, allows us to go further:

\begin{theorem}\label{t:muPartition}
Suppose one has a semi-discrete transport problem, as described in 
\cref{s:semi-discrete}.
Let $F$ be as defined in \cref{Fdef}, and the sets $A_i$ as defined in 
\cref{Aidef} for $i \in \N_n$. Then $F$ $\mu$-partitions $A$ if and only if 
$\mu(B) = 0$.
\end{theorem}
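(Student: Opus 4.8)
The plan is to prove the two implications separately; the forward implication is immediate, and essentially all of the work sits in one of the four conditions of the reverse implication.

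For the forward direction, suppose $F$ $\mu$-partitions $A$. By \cref{bdryset} the boundary set $B$ is the finite union of the sets $A_{ij}$ with $1 \le i < j \le n$, and condition~(2) of \cref{d:part_A} states exactly that $\mu(A_{ij}) = 0$ for every such pair. Finite subadditivity of $\mu$ then gives $\mu(B) \le \sum_{i<j} \mu(A_{ij}) = 0$, so $\mu(B) = 0$. This direction uses nothing beyond monotonicity and finite subadditivity.

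For the reverse direction, assume $\mu(B) = 0$ and verify the four conditions of \cref{d:part_A} in turn. Condition~(1) holds because $\mu$ is a bounded probability measure whose support lies in the compact set $A$, so $\mu(A) < \infty$. Condition~(2) is immediate: by \cref{Aij} and \cref{bdryset} each $A_{ij}$ is a measurable subset of $B$, whence $\mu(A_{ij}) \le \mu(B) = 0$. Condition~(3) follows because the pairwise intersections $A_i \cap A_j = A_{ij}$ are now $\mu$-null while $\bigcup_{i=1}^n A_i = A$; passing to the disjointified sets $A_i \setminus \bigcup_{j<i} A_j$, which differ from $A_i$ only inside $B$, collapses the measure of the union to $\sum_{i=1}^n \mu(A_i) = \mu(A)$.

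The remaining condition~(4), $\mu(A_i) = \nu(\vc{y}_i) > 0$, is the crux, and I expect it to be the main obstacle, since it is the only place where optimality of the shifts—rather than pure measure theory—must enter. By the semi-discrete theory recalled in \cref{s:semi-discrete}, the shifts defining $F$ realize an optimal Monge map $T$, so there is a measurable $T$ with $\mu(\set{\vc{x} \mid T(\vc{x}) = \vc{y}_i}) = \nu(\vc{y}_i)$ and with $T(\vc{x}) = \vc{y}_i$ forcing $\vc{x} \in A_i$, since the chosen destination attains the maximum in \cref{Fdef}. I would then show that $A_i$ and the preimage $T^{-1}(\vc{y}_i)$ agree off $B$: if $\vc{x} \in A_i \setminus B$ then $\vc{x}$ lies in no other cell, so $T(\vc{x})$ can only equal $\vc{y}_i$, giving $A_i \setminus B \subseteq T^{-1}(\vc{y}_i)$, while $T^{-1}(\vc{y}_i) \subseteq A_i$ gives the reverse inclusion modulo $B$. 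Because $\mu(B) = 0$, it follows that $\mu(A_i) = \mu(T^{-1}(\vc{y}_i)) = \nu(\vc{y}_i)$, which is strictly positive by the standing assumption that $\nu$ has exactly $n$ non-zero values. The delicate point, and the reason the equivalence is interesting, is precisely this last step: when $\mu(B) > 0$ the assignment of boundary mass is genuinely ambiguous and $\mu(A_i)$ can exceed $\nu(\vc{y}_i)$, so it is only the vanishing of $\mu(B)$ that forces the geometric cells to coincide with the mass-balanced transport map.
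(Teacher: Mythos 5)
The paper itself gives no proof of \cref{t:muPartition}; it imports the result from~\cite{Dieci2017a}, so there is no in-paper argument to compare yours against. Judged on its own terms, your proof is essentially correct and you have correctly located the substance of the theorem. The forward direction is, as you say, immediate from condition~(2) of \cref{d:part_A} and finite subadditivity applied to \cref{bdryset}. In the reverse direction, conditions (1)--(3) are pure measure theory exactly as you present them (using $\cup_{i=1}^n A_i = A$ from \cref{ShiftChar} and $A_{ij} \subseteq B$), and condition~(4) is where the transport structure must enter. Your mass-balance argument there is the right one: off the $\mu$-null set $B$ the cells $A_i$ are disjoint, the optimal plan associated with the shifts only moves mass from $A_i$ to $\vc{y}_i$, so $A_i \setminus B$ and the preimage of $\vc{y}_i$ agree up to $\mu$-measure zero and hence $\mu(A_i) = \nu(\vc{y}_i) > 0$.

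Two small points you should make explicit. First, the theorem is only true if the shifts $\set{a_i}_{i=1}^n$ are the \emph{solving} shifts of \cref{ShiftChar} --- those for which the mass transported from $A_i$ to $\vc{y}_i$ equals $\nu(\vc{y}_i)$ --- since for arbitrary shifts one can easily arrange $\mu(B)=0$ while $\mu(A_1)=\mu(A)$, violating condition~(4); you clearly assume this but should state it as a hypothesis. Second, you do not actually need the plan to be a Monge map: the same pigeonhole works for a plan $\pi$, via $\mu(A_i\setminus B) = \pi\bigl((A_i\setminus B)\times\set{\vc{y}_i}\bigr) \leq \nu(\vc{y}_i)$ for each $i$, followed by summing over $i$ and comparing with $\mu(A) = \sum_i \nu(\vc{y}_i)$ to force termwise equality. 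This avoids having to justify that the shift-characterized solution is realized by a map with $T^{-1}(\vc{y}_i)\subseteq A_i$ exactly (rather than $\mu$-a.e.), which is the one slightly hand-waved step in your write-up.
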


Taken together, these statements provide a formal definition and 
condition for what it means for the shift-characterized solution to partition 
$A$:

\medskip
\begin{center}
\fbox{
  \parbox[c]{4.75in}{
The shift-characterized semi-discrete transport problem partitions $A$
--- that is, $F$ $\mu$-partitions $A$ --- if and only if the semi-discrete 
transport problem is Monge under the shift characterization, which is true if 
and only if $\mu(B)=0$.
  }
}
\end{center}

\subsection{Uniqueness of semi-discrete transport 
solutions}\label{s:unique_cond}
Given the semi-discrete transport problem described in \cref{s:semi-discrete},
Corollary 4 of \cite{Cuesta1993a} provides a sufficient condition for 
the existence of a Monge solution that is unique $\mu$-a.e.:
\begin{equation}\label{e:suffUniqueMonge}
\mu\left( \setc{\vc{x} \in A}
{c(\vc{x},\,\vc{y}_i) - c(\vc{x},\,\vc{y}_j) = k } \right) = 0
\quad\quad
\forall\, i,\,j \in \N_n,\,i\neq j,
\quad\quad
\forall\, k \in \R.
\end{equation}
If \cref{e:suffUniqueMonge} is satisfied, $\mu(B)=0$. Therefore, if 
\cref{e:suffUniqueMonge}, then the transport problem is Monge under the 
shift characterization and the transport solution is unique $\mu$-a.e.

However, if a transport problem is Monge under the shift 
characterization, then it has a unique $\mu$-a.e.\ shift-characterized 
solution, whether or not \cref{e:suffUniqueMonge} is satisfied.
This statement is formalized and proved in~\cite{Dieci2017a} as the following 
theorem:

\begin{theorem}[The optimal transport map is unique $\mu$-a.e.]\label{UniqueT}
Given a semi-discrete transport problem, let $\pi^*$ and $\tilde{\pi}^*$ be
optimal transport plans that are both Monge under the shift characterization.
If $T$ is a transport map associated with $\pi^*$, and $\widetilde{T}$ a 
transport map associated with $\tilde{\pi}^*$, then $T = \widetilde{T}$ except 
on a set of $\mu$-measure zero.
\end{theorem}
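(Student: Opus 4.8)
The plan is to reduce the statement to a claim about the two induced partitions of $A$ and then extract a contradiction from the shift structure wherever they disagree. Since $\pi^*$ and $\tilde\pi^*$ are both Monge under the shift characterization, each carries shifts $\set{a_i}_{i=1}^n$ and $\set{\tilde a_i}_{i=1}^n$, functions $F$ and $\widetilde F$ as in \cref{Fdef}, cells $\set{A_i}_{i=1}^n$ and $\set{\widetilde A_i}_{i=1}^n$ as in \cref{Aidef}, and boundary sets $B$ and $\widetilde B$. By \cref{t:muPartition} both $F$ and $\widetilde F$ $\mu$-partition $A$, so $\mu(B)=\mu(\widetilde B)=0$ and $\mu(A_i)=\nu(\vc y_i)=\mu(\widetilde A_i)$ for every $i$. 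Because $T$ agrees with $F$ on $A\setminus B$ and $\widetilde T$ with $\widetilde F$ on $A\setminus\widetilde B$, and since continuity of $c$ makes a unique maximizer in \cref{Fdef} persist on a neighborhood (so that $A_i\setminus B\subseteq\mathring A_i$), we get $T=\vc y_i$ $\mu$-a.e.\ on $A_i$ and $\widetilde T=\vc y_i$ $\mu$-a.e.\ on $\widetilde A_i$. Hence, off the null set $B\cup\widetilde B$, the set $\set{T\neq\widetilde T}$ is exactly the union of the cross-cells $A_i\cap\widetilde A_j$ with $i\neq j$, and it suffices to prove $m_{ij}:=\mu(A_i\cap\widetilde A_j)=0$ whenever $i\neq j$.

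First I would record the strict inequalities coming from each shift system. For $\mu$-a.e.\ $\vc x\in A_i$ the maximum defining $F$ is attained only at index $i$, which rearranges, using $g_{ij}$ from \cref{gij}, to $g_{ij}(\vc x)<a_i-a_j$ for all $j\neq i$; symmetrically, for $\mu$-a.e.\ $\vc x\in\widetilde A_j$ one gets $g_{ji}(\vc x)<\tilde a_j-\tilde a_i$, that is, $-g_{ij}(\vc x)<\tilde a_j-\tilde a_i$. On a cross-cell, for $\mu$-a.e.\ $\vc x\in A_i\cap\widetilde A_j$ with $i\neq j$ both inequalities hold and combine to the sandwich $\tilde a_i-\tilde a_j<g_{ij}(\vc x)<a_i-a_j$. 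Writing $b_k:=\tilde a_k-a_k$, this forces $b_i<b_j$. Thus $m_{ij}>0$ with $i\neq j$ can occur only when $b_i<b_j$, so the off-diagonal support of $(m_{ij})$ respects a strict ordering of the $b_k$.

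The finish is a balance argument. Since $\mu(A_i)=\mu(\widetilde A_i)$, the nonnegative matrix $(m_{ij})$ has equal $i$th row and column sums, $\sum_j m_{ij}=\sum_k m_{ki}$ for every $i$. Let $L$ be the set of indices minimizing $b_k$. For $i\in L$ no index $k$ satisfies $b_k<b_i$, so $m_{ki}=0$ for all $k\neq i$; the $i$th column sum is then just $m_{ii}$, and balance forces the $i$th row sum to equal $m_{ii}$ as well, so every off-diagonal entry in the rows and columns indexed by $L$ vanishes. Deleting $L$ and iterating on the remaining indices drives all off-diagonal $m_{ij}$ to zero, which gives $T=\widetilde T$ $\mu$-a.e.

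I expect the main obstacle to be the middle step: correctly coupling the two \emph{independent} shift systems through the shared costs $g_{ij}$ to produce the strict comparison $b_i<b_j$, and then making the peeling argument robust against ties among the $b_k$, which I would handle by removing the entire minimizing set $L$ at once rather than a single index. The remaining work is routine bookkeeping with the null boundary sets $B$ and $\widetilde B$ furnished by \cref{t:muPartition}.
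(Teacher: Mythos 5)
The paper itself does not contain a proof of \cref{UniqueT}: the result is imported verbatim from the cited reference, so there is no in-text argument to compare yours against line by line. Judged on its own, your proof is correct and essentially complete. The reduction to showing $\mu(A_i\cap\widetilde A_j)=0$ for $i\neq j$ is sound; the one point you gloss is that $A_i\setminus B\subseteq\mathring A_i$ only holds for interiors relative to $A$ (points of $\partial A$ need not be $\R^d$-interior to any cell), but $\partial A$ is $\mu$-null for convex compact $A$, so this is harmless bookkeeping as you say. The core mechanism is right: since $\mu(B)=\mu(\widetilde B)=0$, $\mu$-a.e.\ point of $A_i$ satisfies the strict inequality $g_{ij}(\vc{x})<a_i-a_j$ and $\mu$-a.e.\ point of $\widetilde A_j$ satisfies $g_{ij}(\vc{x})>\tilde a_i-\tilde a_j$, so a positive-measure cross-cell $A_i\cap\widetilde A_j$ yields the strict comparison $b_i<b_j$ for $b_k=\tilde a_k-a_k$; the equal row and column sums $\sum_j m_{ij}=\mu(A_i)=\nu(\vc{y}_i)=\mu(\widetilde A_i)=\sum_k m_{ki}$ follow because each family of cells covers $A$ with $\mu$-null pairwise overlaps; and peeling off the entire set of indices minimizing $b$ (which correctly handles ties) kills all off-diagonal mass by induction. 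This is a clean finite-dimensional cyclical-monotonicity argument, and notably it never invokes optimality of $\pi^*$ and $\tilde\pi^*$ beyond what is already encoded in the shift characterization with mass balance --- so, if anything, you prove a slightly stronger statement than the one quoted.
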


\section{Mathematical support}\label{s:math}
While \cref{e:suffUniqueMonge} implies $\mu(B)=0$, the converse is not true,
as \cref{s:1andInfNorm} shows. Next,
\cref{s:OtherPnorms} identifies a large class of problems where both 
conditions hold and the solution is always unique $\mu$-a.e.

\subsection{Partitioning with the 1-norm 
and \texorpdfstring{$\bm{\infty}$}{∞}-norm}\label{s:1andInfNorm}
Let $X = [0,\,1]^2$, $Y = \set{\vc{y}_1,\,\vc{y}_2}$, and
let $\mu$ be the continuous uniform distribution.
This simple setup can be used to demonstrate
failure to partition
for both the 
uniform norm ($\infty$-norm) and the Manhattan norm ($1$-norm).

\subsubsection{The uniform norm}
Let $\vc{y}_1 = (\sfrac{1}{4},\,\sfrac{1}{2})$ and $\vc{y}_2 = 
(\sfrac{3}{4},\,\sfrac{1}{2})$, and let
$c : \R^2 \times \R^2 \to \R$ be the 
uniform norm ($\infty$-norm):
$c(\vc{x},\,\vc{y}) = \max_{i \in \set{1,\,2}}\, \abs{x_i - y_i}$
for all $\vc{x} = (x_1,\,x_2) \in X$,
$\vc{y} = (y_1,\,y_2) \in Y$.
Consider two examples:
\begin{enumerate}
\item
If $\nu(\vc{y}_1) = \sfrac{1}{32}$, then $\nu(\vc{y}_2) = \sfrac{31}{32}$.
In this case, $\mu(B) = \sfrac{1}{16}$,
and the shift-characterized solution fails to partition $A$.
See \cref{f:infNorm1-32}.
\item
However, if $\nu(\vc{y}_1) = \sfrac{1}{8}$, then $\nu(\vc{y}_2) = 
\sfrac{7}{8}$, 
$\mu(B) = 0$ and the shift-characterized solution does partition $A$.
See \cref{f:infNorm1-8}.
\end{enumerate}
Even though one of the problems illustrated in \cref{f:infNorm} results in a 
partition, \cref{e:suffUniqueMonge} fails in both cases:
\begin{equation*}
\mu\left( \setc*{\vc{x} \in A}
{c(\vc{x},\,\vc{y}_2) - c(\vc{x},\,\vc{y}_1) = k } \right) = 
\tfrac{1}{16}
\quad\quad
\text{ if }\quad\quad
k \in \set*{-\tfrac{1}{2},\,\tfrac{1}{2}}.
\end{equation*}
In general, for this choice of $X$, $Y$, $\mu$, and $c$, the 
shift-characterized solution partitions $A$ if and only if
\begin{equation*}
\nu(\vc{y}_1) \in \left(\tfrac{1}{16},\,\tfrac{15}{16}\right).
\end{equation*}
Thus, when $c$ is the uniform norm, one can have $\mu(B)=0$, giving a 
shift-characterized partition of $A$ that is unique $\mu$-a.e., whether or not 
\cref{e:suffUniqueMonge} is satsified. 

\begin{figure}[htpb]
  \centering
  \subfloat[$\nu(\vc{y}_1) = 1/32$]{%
    \label{f:infNorm1-32}
    \centering
    \resizebox*{0.3\textwidth}{!}{%
    \begin{overpic}[width=\textwidth]{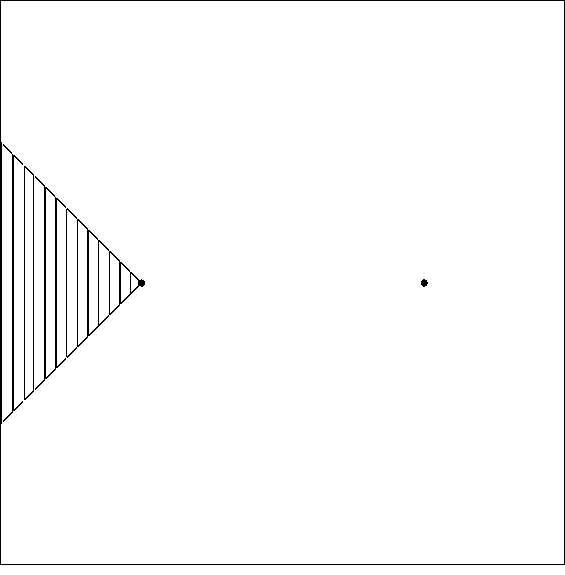}
    \put (27.5,50.0) {\scalebox{2.0}{$\vc{y}_0$}}
    \put (72.5,52.5) {\scalebox{2.0}{$\vc{y}_1$}}
    \end{overpic}}
  }%
  \quad\quad\quad\quad%
  \subfloat[$\nu(\vc{y}_1) = 1/8$]{%
    \label{f:infNorm1-8}
    \centering
    \resizebox{0.3\textwidth}{!}{%
    \begin{overpic}[width=\textwidth]{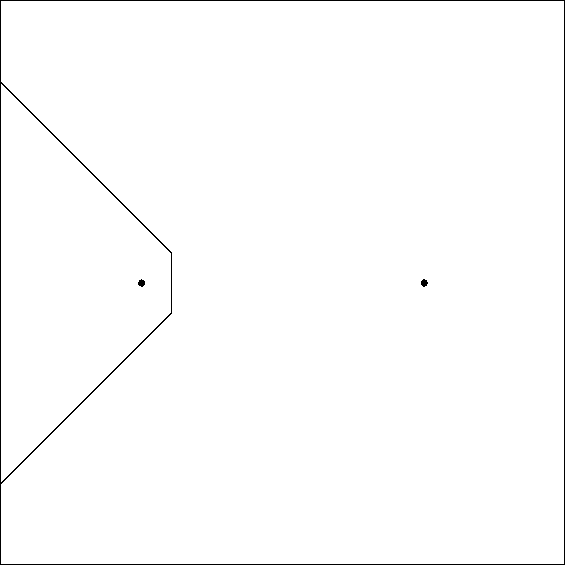}
    \put (22.5,52.5) {\scalebox{2.0}{$\vc{y}_0$}}
    \put (72.5,52.5) {\scalebox{2.0}{$\vc{y}_1$}}
    \end{overpic}}
  }
\caption{$\infty$-norm partitioning example}\label{f:infNorm}
\end{figure}

\subsubsection{The Manhattan norm}
Now let $\vc{y}_1 = (\sfrac{1}{4},\,\sfrac{1}{4})$ and $\vc{y}_2 = 
(\sfrac{3}{4},\,\sfrac{3}{4})$. Let
$c : \R^2 \times \R^2 \to \R$ be the Manhattan norm (1-norm):
$c(\vc{x},\,\vc{y}) = \abs{x_1 - y_1} + \abs{x_2 - y_2}$
for all
$\vc{x} = (x_1,\,x_2) \in X$,
$\vc{y} = (y_1,\,y_2) \in Y$.
Consider three examples:
\begin{enumerate}
\item\label{en:std_auren}
If $\nu(\vc{y}_1) = \sfrac{1}{2}$, then $\nu(\vc{y}_2) = \sfrac{1}{2}$.
In this case, $\mu(B) = \sfrac{1}{8}$,
and the shift-characterized solution fails to partition $A$.
See \cref{f:auren1-2} for an illustration.
\item
If $\nu(\vc{y}_1) = \sfrac{1}{32}$, then $\nu(\vc{y}_2) = \sfrac{31}{32}$ and 
$\mu(B) = \sfrac{1}{8}$, so 
the shift-characterized solution again fails to partition $A$.
This is shown in \cref{f:auren1-32}.
\item
However, if $\nu(\vc{y}_1) = \sfrac{1}{4}$, then $\nu(\vc{y}_2) = 
\sfrac{3}{4}$. In this case 
$\mu(B) = 0$ and the shift-characterized solution does partition $A$.
See \cref{f:auren1-4}.
\end{enumerate}
Once again, \cref{e:suffUniqueMonge} fails in all the \cref{f:auren} cases:
\begin{equation*}
\mu\left( \setc*{\vc{x} \in A}
{c(\vc{x},\,\vc{y}_2) - c(\vc{x},\,\vc{y}_1) = k } \right) = 
\tfrac{1}{8}
\quad\quad
\text{ if }\quad\quad
k \in \set*{-\tfrac{\sqrt{2}}{2},\,0,\,\tfrac{\sqrt{2}}{2}}.
\end{equation*}
In fact, for this choice of $X$, $Y$, $\mu$, and $c$, the 
shift-characterized solution partitions $A$ if and only if
\begin{equation*}
\nu(\vc{y}_1) \in \left(\tfrac{1}{16},\,\tfrac{7}{16}\right)
\cup \left(\tfrac{9}{16},\,\tfrac{15}{16}\right).
\end{equation*}

Thus, as \cref{f:auren} illustrates, when $c$ is the 1-norm, one can have 
$\mu(B)=0$, giving a shift-characterized partition of $A$ that is unique 
$\mu$-a.e., whether or not \cref{e:suffUniqueMonge} is satsified.

\Cref{f:auren1-2} is worth special consideration, because it is not simply a 
non-partitioning shift-characterized transport solution:
it also constitutes a failed Voronoi diagram.
One can see a similar example in Figure 37 
of~\cite{Aurenhammer1991a}, offered as part of a discussion on methods for 
resolving lack of partitioning and uniqueness for certain Voronoi diagrams.

\begin{figure}[htpb]
  \centering
  \subfloat[$\nu(\vc{y}_1) = 1/2$]{%
    \label{f:auren1-2}
    \centering
    \resizebox*{0.3\textwidth}{!}{%
    \begin{overpic}[width=\textwidth]{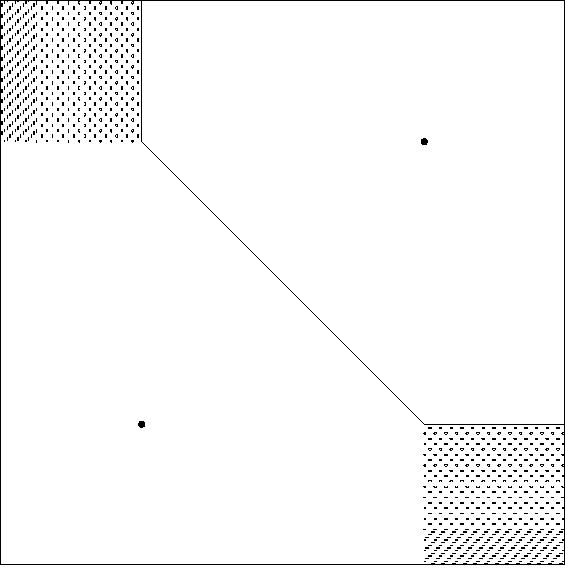}
    \put (22.5,28.5) {\scalebox{2.0}{$\vc{y}_0$}}
    \put (72.5,77.5) {\scalebox{2.0}{$\vc{y}_1$}}
    \end{overpic}}
  }%
  \quad%
  \subfloat[$\nu(\vc{y}_1) = 1/32$]{%
    \label{f:auren1-32}
    \centering
    \resizebox*{0.3\textwidth}{!}{%
    \begin{overpic}[width=\textwidth]{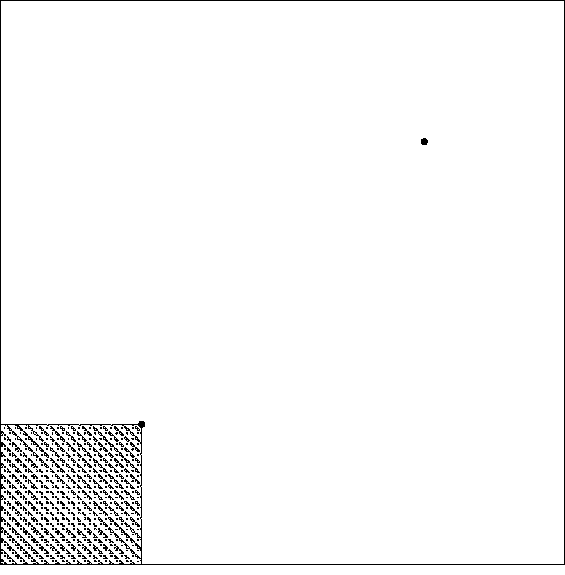}
    \put (22.5,28.5) {\scalebox{2.0}{$\vc{y}_0$}}
    \put (72.5,77.5) {\scalebox{2.0}{$\vc{y}_1$}}
    \end{overpic}}
  }%
  \quad%
  \subfloat[$\nu(\vc{y}_1) = 1/4$]{%
    \label{f:auren1-4}
    \centering
    \resizebox{0.3\textwidth}{!}{%
    \begin{overpic}[width=\textwidth]{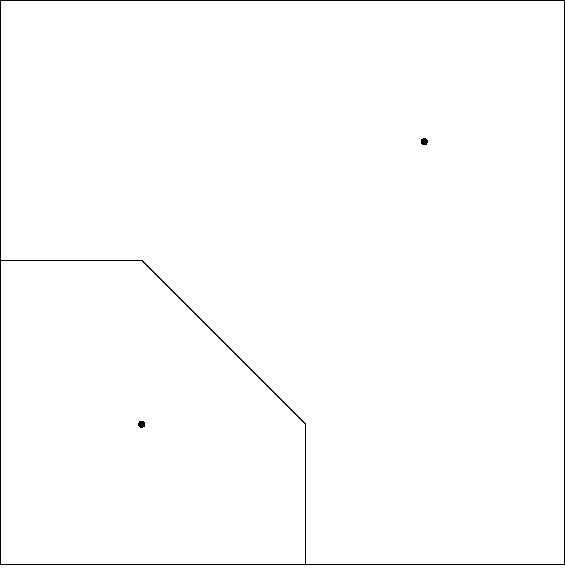}
    \put (22.5,28.5) {\scalebox{2.0}{$\vc{y}_0$}}
    \put (72.5,77.5) {\scalebox{2.0}{$\vc{y}_1$}}
    \end{overpic}}
  }
\caption{1-norm partitioning example}\label{f:auren}
\end{figure}

\subsection{Partitioning with \texorpdfstring{$\bm{p}$}{p}-norms when
\texorpdfstring{$\bm{p \in (1,\,\infty)}$}{p ∈ (1,∞)}-norm}\label{s:OtherPnorms}
Given the semi-discrete transport assumptions already stated,
let $c : \R^d \times \R^d \to \R$ be a $p$-norm with $p \in (1,\,\infty)$:
\begin{equation}
c(\vc{x},\,\vc{y}) :=
\left[\sum_{i=1}^d \abs{x_i - y_i}^p \right]^{1/p}\!\!\!\!\!\!\!\!\!,
\quad\quad
\forall\,
\vc{x} = (x_1,\,\ldots,\,x_d) \in X,
\quad\quad
\forall\,
\vc{y} = (y_1,\,\ldots,\,y_d) \in Y.
\end{equation}
Then the semi-discrete transport problem is always Monge under the 
shift characterization.

This assertion will be shown in two steps:
\begin{algolist}
\item\label{l:aijEqCost}
If $g_{ij}$, defined in \cref{gij}, is equal to the 
constant value $a_i - a_j$ in some 
neighborhood of $\vc{x}_0 \in A_{ij}$, then $\abs{a_i - a_j} = 
c(\vc{y}_i,\,\vc{y}_j)$.
[\cref{t:dirIncrease}]
\item\label{l:AllCollinear}
It follows from \cref{l:aijEqCost} that $\mu(B) > 0$
implies the existence of a 
ball of positive radius whose points are all collinear with both 
$\vc{y}_i$ and $\vc{y}_j$.
[\cref{t:WhenLpIsMonge}]
\end{algolist}
Because of the contradiction inherent in \cref{l:AllCollinear}, $\mu(B)=0$, and 
so \cref{t:WhenLpIsMonge} concludes that the problem must be Monge 
under the shift characterization.

\begin{theorem}\label{t:dirIncrease}
Let $c$ be a $p$-norm with $p \in (1,\,\infty)$, and $\vc{x}_0 \in 
A_{ij}$ for some $i,\,j \in \N_n$, $i \neq j$.
If $g_{ij}(\vc{x}) = a_i - a_j$ for all $\vc{x}$ in a neighborhood of 
$\vc{x}_0$, then $\abs{a_i - a_j} = c(\vc{y}_i,\,\vc{y}_j)$.
\end{theorem}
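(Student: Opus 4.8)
The plan is to exploit the strict convexity and smoothness of the $p$-norm for $p \in (1,\infty)$ to turn the hypothesis into a rigid geometric relation among $\vc{x}_0$, $\vc{y}_i$, and $\vc{y}_j$, from which the claimed equality drops out. First I would reduce to the differentiable case. Since $g_{ij}$ is constant on a neighborhood $U$ of $\vc{x}_0$, it is in particular differentiable there with vanishing gradient. For $p \in (1,\infty)$ the map $\vc{x} \mapsto c(\vc{x},\vc{y})$ is $C^1$ on $\R^d \setminus \set{\vc{y}}$ but fails to be differentiable at $\vc{y}$ itself (like every norm, at its cone point). Hence if $\vc{x}_0 = \vc{y}_i$, then $c(\cdot,\vc{y}_i) = g_{ij} + c(\cdot,\vc{y}_j)$ would be differentiable at $\vc{y}_i$ --- impossible; likewise $\vc{x}_0 \neq \vc{y}_j$. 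So I may assume $\vc{x}_0 \notin \set{\vc{y}_i,\vc{y}_j}$, shrink $U$ to avoid both points, and record $\nabla_{\vc{x}}\, c(\vc{x}_0,\vc{y}_i) = \nabla_{\vc{x}}\, c(\vc{x}_0,\vc{y}_j)$.

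Next I would compute these gradients explicitly. Writing $\vc{u} = \vc{x}_0 - \vc{y}_i$, $\vc{w} = \vc{x}_0 - \vc{y}_j$, and $\phi_p(t) := \abs{t}^{p-1}\mathrm{sgn}(t)$, differentiating the $p$-norm gives, for each coordinate $m$,
\[
\frac{\phi_p(u_m)}{c(\vc{x}_0,\vc{y}_i)^{p-1}} = \frac{\phi_p(w_m)}{c(\vc{x}_0,\vc{y}_j)^{p-1}}.
\]
The crux of the argument is that this forces $\vc{u}$ and $\vc{w}$ to be positive multiples of one another. Setting $\alpha := \left(c(\vc{x}_0,\vc{y}_i)/c(\vc{x}_0,\vc{y}_j)\right)^{p-1} > 0$, the relation reads $\phi_p(u_m) = \alpha\,\phi_p(w_m)$ for all $m$. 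Because $p > 1$, the scalar map $\phi_p$ is a strictly increasing odd bijection of $\R$ with inverse $\phi_p^{-1}(s) = \abs{s}^{1/(p-1)}\mathrm{sgn}(s)$, which is homogeneous of degree $1/(p-1)$; applying $\phi_p^{-1}$ to each equation yields $u_m = \lambda\, w_m$ with $\lambda := \alpha^{1/(p-1)} = c(\vc{x}_0,\vc{y}_i)/c(\vc{x}_0,\vc{y}_j) > 0$, independent of $m$. Thus $\vc{x}_0 - \vc{y}_i = \lambda(\vc{x}_0 - \vc{y}_j)$ with $\lambda > 0$, and $\lambda \neq 1$ since $\vc{y}_i \neq \vc{y}_j$. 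This is precisely where $p \in (1,\infty)$ is indispensable: for $p = 1$ or $p = \infty$ the norm is neither smooth nor strictly convex, $\phi_p$ is no longer injective, and the rigidity fails --- matching the non-partitioning examples of \cref{s:1andInfNorm}.

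Finally I would read off the conclusion from this collinearity. From $\vc{u} = \lambda \vc{w}$ we get $c(\vc{x}_0,\vc{y}_i) = \lambda\, c(\vc{x}_0,\vc{y}_j)$, while $\vc{y}_i - \vc{y}_j = \vc{w} - \vc{u} = (1-\lambda)\vc{w}$ gives $c(\vc{y}_i,\vc{y}_j) = \abs{1-\lambda}\, c(\vc{x}_0,\vc{y}_j)$. Hence
\[
\abs{g_{ij}(\vc{x}_0)} = \abs{c(\vc{x}_0,\vc{y}_i) - c(\vc{x}_0,\vc{y}_j)} = \abs{\lambda - 1}\, c(\vc{x}_0,\vc{y}_j) = c(\vc{y}_i,\vc{y}_j).
\]
Since $g_{ij}(\vc{x}_0) = a_i - a_j$ by hypothesis, this yields $\abs{a_i - a_j} = c(\vc{y}_i,\vc{y}_j)$, as claimed. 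I expect the only genuine obstacle to be the injectivity step for the duality map $\phi_p$; everything else is a direct computation, and that step is handled cleanly by the explicit inverse of $\phi_p$ available exactly when $1 < p < \infty$.
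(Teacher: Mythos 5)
Your proof is correct, and it takes a genuinely different --- and leaner --- route than the paper's. Both arguments start identically: differentiate the constancy hypothesis to equate $\nabla c(\cdot,\vc{y}_i)$ and $\nabla c(\cdot,\vc{y}_j)$ at $\vc{x}_0$; the paper's \cref{e:eqKthDerivSimp} is exactly your componentwise relation $u_m/c_i(\vc{x}_0)=w_m/c_j(\vc{x}_0)$. From there the paths diverge. The paper argues by contradiction: it first eliminates $\abs{a_i-a_j}>c(\vc{y}_i,\vc{y}_j)$ with the triangle inequality, then assumes strict inequality the other way and splits into cases on the number $K$ of coordinates with $x_k\neq y^i_k$, invoking equality of \emph{second}-order partials in the case $K>1$ to force $c_i(\vc{x}_0)=c_j(\vc{x}_0)$ and hence $\vc{y}_i=\vc{y}_j$. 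You instead invert the duality map $\phi_p$ --- legitimate precisely for $1<p<\infty$ --- to extract the vector identity $\vc{x}_0-\vc{y}_i=\lambda(\vc{x}_0-\vc{y}_j)$ with $\lambda=c_i(\vc{x}_0)/c_j(\vc{x}_0)>0$, after which $\abs{g_{ij}(\vc{x}_0)}=\abs{\lambda-1}\,c(\vc{x}_0,\vc{y}_j)=c(\vc{y}_i,\vc{y}_j)$ is a one-line homogeneity computation. Your version buys three things: no case analysis or contradiction scaffolding; no second derivatives, which for $1<p<2$ are delicate at points where some coordinate $x_k-y^i_k$ vanishes (note the factor $\abs{x_k-y^i_k}^{p-2}$ in the paper's second-order formula); and the collinearity of $\vc{x}_0$, $\vc{y}_i$, $\vc{y}_j$ obtained directly, which is exactly the fact that \cref{t:WhenLpIsMonge} later re-derives from Minkowski's equality condition. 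Your preliminary step ruling out $\vc{x}_0\in\set{\vc{y}_i,\vc{y}_j}$ via non-differentiability of a norm at its center is sound, though it could be shortened: if $\vc{x}_0=\vc{y}_i$ then $g_{ij}(\vc{x}_0)=-c(\vc{y}_i,\vc{y}_j)$ and the conclusion holds trivially.
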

\begin{proof}
Let $c$ be a $p$-norm with $p \in (1,\,\infty)$, $\vc{x}_0 \in 
A_{ij}$, and $g_{ij}(\vc{x}) = a_i - a_j$ for all $\vc{x}$ in some neighborhood 
of $\vc{x}_0$.
Suppose to the contrary, however, that $\abs{a_i - a_j} \neq 
c(\vc{y}_i,\,\vc{y}_j)$.

Say $\abs{a_i - a_j} > c(\vc{y}_i,\,\vc{y}_j)$, and assume without loss of 
generality that $\abs{a_i - a_j} = a_i - a_j$. Then
\begin{align*}
g_{ij}(\vc{x}_0) = c(\vc{x}_0\,\vc{y}_i) - c(\vc{x}_0\,\vc{y}_j)
= a_i - a_j > c(\vc{y}_i,\,\vc{y}_j),
\end{align*}
which implies $c(\vc{x}_0\,\vc{y}_i) > c(\vc{x}_0\,\vc{y}_j)
+ c(\vc{y}_i,\,\vc{y}_j)$.
This is a violation of the triangle inequality.
Therefore, it must be the case that $\abs{a_i - a_j} < c(\vc{y}_i,\,\vc{y}_j)$.

For all $k \in \N_n$, define
$c_k(\vc{x}) := c(\vc{x},\,\vc{y}_k)$.
Because $\abs{a_i-a_j} < c(\vc{y}_i,\,\vc{y}_j)$, $\vc{x}_0 \neq \vc{y}_i$ and 
$\vc{x}_0 \neq \vc{y}_j$.
Hence, $c_i(\vc{x}_0) > 0$ and $c_j(\vc{x}_0) > 0$.

Because $g_{ij}$ is constant in a neighborhood of $\vc{x}_0$,
$\nabla g_{ij}(\vc{x}_0) =
\nabla c_i(\vc{x}_0) - \nabla c_j(\vc{x}_0) = 0$,
which implies $\nabla c_i(\vc{x}_0) = \nabla c_j(\vc{x}_0)$.
Hence, each of the first-order partial derivatives of $c_i$ and $c_j$ are equal 
at $\vc{x}_0$.

Assume $\vc{x}_0 = (x_1,\,\ldots,\,x_d)$,
$\vc{y}_i = (y^i_1,\,\ldots,\,y^i_d)$, and
$\vc{y}_j = (y^j_1,\,\ldots,\,y^j_d)$.
Then the equality of the $k$-th partial derivatives,
$\nabla_{\!x_k} c_i(\vc{x}_0) = \nabla_{\!x_k} c_j(\vc{x}_0)$,
gives
\begin{equation*}
(x_k - y^i_k)\abs{x_k - y^i_k}^{p-2}\left(c_i(\vc{x}_0)\right)^{1-p}
=
(x_k - y^j_k)\abs{x_k - y^j_k}^{p-2}\left(c_j(\vc{x}_0)\right)^{1-p}.
\end{equation*}
Thus, $x_k - y^i_k$ and $x_k - y^j_k$ have the same sign or are both zero.
Because $p > 1$, $p-1 > 0$.
Hence, taking the $(p-1)$-th root of both sides,
\begin{equation}\label{e:eqKthDerivSimp}
\frac{x_k - y^i_k}{c_i(\vc{x}_0)}
=
\frac{x_k - y^j_k}{c_j(\vc{x}_0)}
\quad\quad\quad\forall\,
k \in \N_d.
\end{equation}
As a consequence of \cref{e:eqKthDerivSimp}, $x_k - y^i_k = 0$ if and only if 
$x_k - y^j_k = 0$. Hence, $x_k = y^i_k$ if and only if $x_k = y^j_k$.

Let $K$ be the total number of $k$-th directional components satisfying $x_k 
\neq y^i_k$.
Consider three cases: $K = 0$, $K = 1$, and $K > 1$.
\begin{description}
\item[\textbf{$\bm{K=0}$}]
Then $x_k = y^i_k = y^j_k$ for all $k \in \N_d$, 
in which case $\vc{y}_i = \vc{y}_j$.
Since the semi-discrete transport problem requires distinct non-zero points in 
$Y$, it must be the case that $i = j$, contradicting the initial assumption 
that $i \neq j$.
Hence, $K \geq 1$.
\item[\textbf{$\bm{K = 1}$}]
There exists exactly one $k$ such that the components are not 
equal.
Since $x_k - y^i_k$ 
and $x_k - y^j_k$ have the same sign,
\begin{align*}
\abs{g_{ij}(\vc{x}_0)} = \abs{ (x_k - y^i_k) - (x_k - y^j_k) }
= \abs{ y^j_k - y^i_k } = c(\vc{y}_i,\,\vc{y}_j).
\end{align*}
This contradicts the assumption that $\abs{a_i - a_j} < 
c(\vc{y}_i,\,\vc{y}_j)$, and hence $K > 1$.
\item[$\bm{K>1}$]
Because $g_{ij}$ is constant in some neighborhood of $\vc{x}_0$, it must also 
be 
the case that $\nabla^2 g_{ij}(\vc{x}_0)=0$.
Hence, $\nabla^2 c_i(\vc{x}_0) = \nabla^2 c_j(\vc{x}_0)$, so each of the 
second-order partial derivatives of $c_i$ and $c_j$ are equal at $\vc{x}_0$.
The equality of the second-order partial derivatives taken with respect to 
$x_k$ gives
\begin{equation}
\frac{(p-1)\abs{x_k - y^i_k}^{p-2}}{(c_i(\vc{x}_0))^{2p-1}}
\left[ (c_i(\vc{x}_0))^p - \abs{x_k - y^i_k}^p \right]
=
\frac{(p-1)\abs{x_k - y^j_k}^{p-2}}{(c_j(\vc{x}_0))^{2p-1}}
\left[ (c_j(\vc{x}_0))^p - \abs{x_k - y^j_k}^p \right],
\end{equation}
which can be rewritten as
\begin{equation}\label{e:eqKthSecndSimp}
\frac{p-1}{c_i(\vc{x}_0)}
\left(\frac{\abs{x_k - y^i_k}}{c_i(\vc{x}_0)}\right)^{p-2}
\left[ 1 - \left(\frac{\abs{x_k - y^i_k}}{c_i(\vc{x}_0)}\right)^{p}
\right]
=
\frac{p-1}{c_j(\vc{x}_0)}
\left(\frac{\abs{x_k - y^j_k}}{c_j(\vc{x}_0)}\right)^{p-2}
\left[ 1 - \left(\frac{\abs{x_k - y^j_k}}{c_j(\vc{x}_0)}\right)^{p}
\right].
\end{equation}
Applying \cref{e:eqKthDerivSimp}, define
\begin{equation*}
\sigma_k = 
\frac{\abs{x_k - y^i_k}}{c_i(\vc{x}_0)}
=
\frac{\abs{x_k - y^j_k}}{c_j(\vc{x}_0)}.
\end{equation*}
Then \cref{e:eqKthSecndSimp} can be rewritten as
\begin{equation}\label{e:eqKthSecndSimp2}
\frac{p-1}{c_i(\vc{x}_0)}\, \sigma_k^{p-2}
( 1 - \sigma_k^{p} )
\\ =
\frac{p-1}{c_j(\vc{x}_0)}\, \sigma_k^{p-2}
( 1 - \sigma_k^{p} ).
\end{equation}

By assumption, for all $k \in \N_d$, $x_k - y^i_k \neq 0$ and 
$x_k - y^j_k \neq 0$.
Hence, $\sigma_k > 0$.

Since $d > 1$, and $\abs{x_k - y^i_k} > 0$ for all $k \in \N_d$, it must be 
that $\abs{x_k - y^i_k} < c_i(\vc{x}_0)$ for all $k \in \N_d$.
Therefore,
\begin{equation*}
\sigma_k = \frac{\abs{x_k - y^i_k}}{c_i(\vc{x}_0)} < 1,
\end{equation*}
which implies
$1 - \sigma_k^{p} > 0$.
Therefore,
$(p-1)\,\sigma_k^{p-2}( 1 - \sigma_k^{p} ) > 0$, and
\cref{e:eqKthSecndSimp2} simplifies to
$\frac{1}{c_i(\vc{x}_0)}
 =
\frac{1}{c_j(\vc{x}_0)}$.

Thus, $c_i(\vc{x}_0) = c_j(\vc{x}_0)$.
Combining this with \cref{e:eqKthDerivSimp} implies $y^i_k = y^j_k$ for all $k 
\in \N_d$, and so $\vc{y}_i = \vc{y}_j$.
Since $\vc{y}_i = \vc{y}_j$, and the semi-discrete transport problem requires 
distinct non-zero points in $Y$, it must be the case that $i=j$, contradicting 
the initial assumption that $i \neq j$.
Thus, $K \ngtr 1$.
\end{description}
All choices of $K$ lead to contradictions.
Hence, if $c$ is a $p$-norm for some $p \in (1,\,\infty)$, $\vc{x}_0 \in 
A_{ij}$, 
$i \neq j$, and $g_{ij}(\vc{x}) = a_i - a_j$ for all $\vc{x}$ in some 
neighborhood of $\vc{x}_0 \in A_{ij}$, then it must be the case that $\abs{a_i 
- a_j} = c(\vc{y}_i,\,\vc{y}_j)$.
\end{proof}

\begin{theorem}\label{t:WhenLpIsMonge}
If $c$ is a $p$-norm for some $p \in (1,\,\infty)$, then the semi-discrete 
transport problem is Monge under the shift characterization.
\end{theorem}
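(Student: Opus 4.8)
The plan is to apply \cref{t:muPartition}, which reduces the assertion to proving $\mu(B) = 0$, and to argue by contradiction. Suppose $\mu(B) > 0$. Since $B = \bigcup_{i<j} A_{ij}$ is a finite union, there is a pair $i \neq j$ with $\mu(A_{ij}) > 0$; because $\mu$ is absolutely continuous with respect to Lebesgue measure $\lambda$ (it is given by a density), this forces $\lambda(A_{ij}) > 0$. By construction $A_{ij} = A_i \cap A_j$ consists of the points where $F(\vc{x}) = a_i - c(\vc{x},\,\vc{y}_i) = a_j - c(\vc{x},\,\vc{y}_j)$, that is, where $g_{ij}(\vc{x}) = a_i - a_j$. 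Hence the level set $\set{g_{ij} = a_i - a_j}$ carries positive Lebesgue measure.

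The hard part, and the place where $p \in (1,\,\infty)$ is essential, is upgrading this merely positive-measure level set into a set on which \cref{t:dirIncrease} can be invoked: positive measure by itself produces no interior. I would exploit the analytic structure of the cost. Deleting $\vc{y}_i,\,\vc{y}_j$ is measure-theoretically harmless, and away from the coordinate hyperplanes through $\vc{y}_i$ and $\vc{y}_j$ (a set of full measure) each $c(\cdot,\,\vc{y}_k)$ is real-analytic, since $\abs{x_k - y^k_k}^p = ((x_k - y^k_k)^2)^{p/2}$ is analytic where $x_k \neq y^k_k$. Thus $g_{ij} - (a_i - a_j)$ is real-analytic on each connected component of that open set; as its zero set meets one such component in positive measure, it must vanish identically there, producing an open ball $\mathcal{B}$ on which $g_{ij} \equiv a_i - a_j$. (Equivalently, one may avoid analyticity: since a $p$-norm with $p \in (1,\,\infty)$ is $C^1$ off its center, the part of the level set where $\nabla g_{ij} \neq 0$ is a $C^1$ hypersurface and hence $\lambda$-null, so $\nabla g_{ij} = 0$ on a positive-measure subset.) This is exactly the step where the $1$-norm and $\infty$-norm, being neither strictly convex nor analytic across their ridges, allow genuinely fat boundary sets.

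With $g_{ij}$ constant on the open ball $\mathcal{B}$, \cref{t:dirIncrease} applies at any $\vc{x}_0 \in \mathcal{B}$ and yields $\abs{a_i - a_j} = c(\vc{y}_i,\,\vc{y}_j)$. Consequently, for every $\vc{x} \in \mathcal{B}$ the identity $g_{ij}(\vc{x}) = a_i - a_j$ becomes an equality in the triangle inequality, say $c(\vc{x},\,\vc{y}_i) = c(\vc{x},\,\vc{y}_j) + c(\vc{y}_j,\,\vc{y}_i)$. Because the $p$-norm with $p \in (1,\,\infty)$ is strictly convex, equality forces $\vc{y}_j$ onto the segment joining $\vc{x}$ and $\vc{y}_i$, so every point of $\mathcal{B}$ is collinear with $\vc{y}_i$ and $\vc{y}_j$. (The same collinearity drops out directly from $\nabla g_{ij} = 0$ through \cref{e:eqKthDerivSimp}, which says $\vc{x} - \vc{y}_i$ and $\vc{x} - \vc{y}_j$ are parallel.) But an open ball of positive radius in $\R^d$ with $d \geq 2$ cannot lie on a single line, which is the promised contradiction. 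Hence $\mu(B) = 0$, and by \cref{t:muPartition} the semi-discrete problem is Monge under the shift characterization.
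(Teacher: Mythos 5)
Your proof is correct and follows the same overall strategy as the paper's: assume $\mu(B)>0$, extract a pair with $\mu(A_{ij})>0$, produce an open ball on which $g_{ij}\equiv a_i-a_j$, invoke \cref{t:dirIncrease} to get $\abs{a_i-a_j}=c(\vc{y}_i,\vc{y}_j)$, and conclude via equality in Minkowski's inequality that a ball of positive radius is collinear with $\vc{y}_i$ and $\vc{y}_j$, a contradiction. The one place you genuinely diverge is the step you correctly flag as ``the hard part,'' and there your version is stronger than the paper's. The paper asserts that because $\mu$ is nonatomic there exist $\vc{x}_0$ and $\epsilon>0$ with $\mathcal{B}_\epsilon(\vc{x}_0)\subseteq A_{ij}$; nonatomicity does not deliver this, since a set of positive measure need not have nonempty interior (a fat Cantor set is the standard obstruction), so as written the paper's proof has a gap exactly where \cref{t:dirIncrease}'s hypothesis --- constancy of $g_{ij}$ on a neighborhood --- must be verified. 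Your real-analyticity argument (each $c(\cdot,\vc{y}_k)$ is analytic off finitely many coordinate hyperplanes, and an analytic function whose level set has positive measure on a connected component is constant there) legitimately produces the required open ball, and your parenthetical $C^1$ alternative is arguably cleaner still: it bypasses \cref{t:dirIncrease} entirely, since $\nabla g_{ij}=0$ on a positive-measure set already forces, via \cref{e:eqKthDerivSimp}, that all such points lie on the single line through $\vc{y}_i$ and $\vc{y}_j$, which is Lebesgue-null for $d\geq 2$. (Both your argument and the paper's implicitly require $d\geq 2$ in the final collinearity contradiction, consistent with the $d>1$ assumption made in the proof of \cref{t:dirIncrease}.)
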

\begin{proof}
Assume the contrary is true.
Then $\mu(B) > 0$, so $\mu(A_{ij}) > 0$ for some $i,\,j \in \N_n$, $i \neq j$. 
Because $\mu$ is nonatomic, there exist $\vc{x}_0 \in A_{ij}$ and 
$\epsilon > 0$ such that the ball
$\mathcal{B}_{\epsilon}(\vc{x}_0)$, defined with respect to the Euclidean 
space $\R^d$, satisfies 
$\mathcal{B}_{\epsilon}(\vc{x}_0) \subseteq A_{ij}$ and
$\mu(\mathcal{B}_{\epsilon}(\vc{x}_0)) > 0$.
By \cref{t:dirIncrease}, $\abs{a_i - a_j} = c(\vc{y}_i,\,\vc{y}_j)$.
Assume without loss of generality that $\abs{a_i - a_j} = a_i - a_j$.

Let $\vc{x} \in \mathcal{B}_{\epsilon}(\vc{x}_0)$.
Since $\vc{x} \in A_{ij}$,
\begin{align*}
g_{ij}(\vc{x}) = a_i - a_j
\quad \iff \quad
c(\vc{x},\,\vc{y}_i) - c(\vc{x},\,\vc{y}_j) = c(\vc{y}_i,\,\vc{y}_j)
\quad \iff \quad
c(\vc{x},\,\vc{y}_i) = c(\vc{x},\,\vc{y}_j) + c(\vc{y}_i,\,\vc{y}_j).
\end{align*}
Because $c$ is a $p$-norm and $p \in (1,\,\infty)$, Minkowski's inequality 
implies 
that $\vc{x}$, $\vc{y}_i$, and $\vc{y}_j$ are all collinear.
The choice of $\vc{x}$ was nonspecific, and therefore every point in the ball 
$\mathcal{B}_{\epsilon}(\vc{x}_0)$ must be collinear with the points 
$\vc{y}_i$ and $\vc{y}_j$.

Of course, this is impossible, and so $\mu(A_{ij}) = 0$ for all $i,\,j \in 
\N_n$, $i \neq j$. Therefore, $\mu(B) = 0$.
From this final contradiction, it is clear that the semi-discrete transport 
problem must be Monge under the shift characterization.
\end{proof}

\begin{corollary}
If the semi-discrete transport problem is defined as given in 
\cref{s:semi-discrete}, and $c$ is a $p$-norm for some $p \in (1,\,\infty)$, 
then \cref{e:suffUniqueMonge} is satisfied, and the optimal transport solution 
is unique $\mu$-a.e.
\end{corollary}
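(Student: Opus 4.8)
The plan is to establish \cref{e:suffUniqueMonge} directly; once it holds, the $\mu$-a.e.\ uniqueness of the solution is immediate from the material recalled in \cref{s:unique_cond} (Corollary 4 of \cite{Cuesta1993a}, or equivalently \cref{t:WhenLpIsMonge} together with \cref{UniqueT}). So the whole task reduces to showing that for every pair $i,\,j\in\N_n$ with $i\neq j$ and every level $k\in\R$, the set $L:=\setc{\vc{x}\in A}{g_{ij}(\vc{x})=k}$ satisfies $\mu(L)=0$. Because $\mu$ is a density, it is absolutely continuous with respect to Lebesgue measure $\lambda$, so it suffices to prove $\lambda(L)=0$. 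I would flag at the outset that \cref{t:dirIncrease,t:WhenLpIsMonge} cannot be invoked verbatim here, since they concern only the distinguished level $a_i-a_j$ on $A_{ij}$; instead I would reuse their internal first-order computation at a general level $k$.

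Write $c_i(\vc{x}):=c(\vc{x},\,\vc{y}_i)$. For $p\in(1,\,\infty)$ the $p$-norm is $C^1$ away from its center, so $g_{ij}=c_i-c_j$ is $C^1$ on $\R^d\setminus\set{\vc{y}_i,\,\vc{y}_j}$, and the removed set $\set{\vc{y}_i,\,\vc{y}_j}$ is $\lambda$-null. I would then split $L\setminus\set{\vc{y}_i,\,\vc{y}_j}$ according to whether $\nabla g_{ij}$ vanishes. On the open set $\set{\nabla g_{ij}\neq 0}$, the implicit function theorem writes $\set{g_{ij}=k}$ locally as the graph of a $C^1$ function of $d-1$ variables, hence as a $C^1$ hypersurface; covering this second-countable set by countably many such charts shows $L\cap\set{\nabla g_{ij}\neq 0}$ is $\lambda$-null, for every $k$ at once.

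The key step is the vanishing-gradient part $L\cap\set{\nabla g_{ij}=0}$. At such a point $\nabla c_i=\nabla c_j$, and taking the $(p-1)$-th root of the equal partials, exactly as in the derivation of \cref{e:eqKthDerivSimp} inside the proof of \cref{t:dirIncrease}, assembles the coordinatewise identities into the single vector equation
\begin{equation*}
\frac{\vc{x}-\vc{y}_i}{c_i(\vc{x})}=\frac{\vc{x}-\vc{y}_j}{c_j(\vc{x})}.
\end{equation*}
Since $c_i(\vc{x}),\,c_j(\vc{x})>0$, the displacements $\vc{x}-\vc{y}_i$ and $\vc{x}-\vc{y}_j$ are positive multiples of one common unit vector, forcing $\vc{x}$, $\vc{y}_i$, and $\vc{y}_j$ to be collinear. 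Hence $L\cap\set{\nabla g_{ij}=0}$ lies on the line through $\vc{y}_i$ and $\vc{y}_j$, which is $\lambda$-null in $\R^d$ (this is where $d\geq 2$ enters, the same degeneracy that makes the collinear ball in \cref{t:WhenLpIsMonge} impossible). Combining the two parts gives $\lambda(L)=0$, hence $\mu(L)=0$, for all $i\neq j$ and all $k$, which is \cref{e:suffUniqueMonge}.

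I expect the main obstacle to be organizational rather than computational: recognizing that \cref{e:suffUniqueMonge} demands control of \emph{every} level set, not merely the one treated by \cref{t:dirIncrease,t:WhenLpIsMonge}, and observing that their argument in fact delivers collinearity from the first-order condition $\nabla g_{ij}=0$ alone, so that the second-order case analysis is not needed here. The remaining care goes into the regularity claim for the $p$-norm that licenses the implicit function theorem, and into the routine measure-theoretic bookkeeping of the decomposition.
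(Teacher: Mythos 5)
Your proof is correct, but it takes a genuinely different route from the paper's. The paper never analyzes the level sets of $g_{ij}$ directly for $\abs{k} \leq c(\vc{y}_i,\,\vc{y}_j)$: it first uses the triangle inequality to show the levels with $\abs{k} > c(\vc{y}_i,\,\vc{y}_j)$ are empty, and then argues that because \cref{t:WhenLpIsMonge} yields $\mu(A_{ij})=0$ for \emph{every} admissible choice of $\nu$ --- hence for every achievable value of the shift difference $a_i - a_j$ --- the remaining levels must also be $\mu$-null. That second step is a quantification over problem instances rather than a direct computation, and it is the least explicit part of the paper's argument (it also tacitly identifies $A_{ij}$ with the full level set $\setc{\vc{x}\in A}{g_{ij}(\vc{x})=a_i-a_j}$, which is literal only when $n=2$). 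You instead establish \cref{e:suffUniqueMonge} directly and uniformly in $k$ by splitting each level set into a regular part, where $\nabla g_{ij}\neq 0$ and the implicit function theorem produces a $C^1$ hypersurface of Lebesgue measure zero, and a critical part, where the first-order identity \cref{e:eqKthDerivSimp} forces collinearity with $\vc{y}_i$ and $\vc{y}_j$ and so confines the set to a single line. This buys a self-contained argument that does not route through \cref{t:WhenLpIsMonge} at all and avoids the ``assumes nothing about $\nu$'' step; you correctly observe that only the first-order part of the computation in \cref{t:dirIncrease} is needed. The cost is that you must invoke two hypotheses the paper leaves implicit: $d\geq 2$ (which the paper also needs, for the collinear-ball contradiction in \cref{t:WhenLpIsMonge}) and absolute continuity of $\mu$ with respect to Lebesgue measure (justified because $\mu$ is a bounded density). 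Both are consistent with the paper's setup, so your proof stands.
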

\begin{proof}
Suppose $c$ is a $p$-norm for some $p \in (1,\,\infty)$, and assume the 
semi-discrete transport problem is characterized by shifts as given in 
\cref{ShiftChar}.
By the triangle inequality, for all $\vc{x} \in X$, $i,\,j \in \N_n$,
$i \neq j$,
$c(\vc{x},\,\vc{y}_i) \leq c(\vc{x},\,\vc{y}_j) + c(\vc{y}_i,\,\vc{y}_j)$.
Hence, one consequence of the triangle inequality is that 
$g_{ij}(\vc{x}) \leq c(\vc{y}_i,\,\vc{y}_j)$ for all $\vc{x} \in A$,
$i,\,j \in \N_n$ such that $i \neq j$.
Therefore,
$\setc{\vc{x} \in A}{g_{ij}(\vc{x}) = k } = \varnothing$
if $k < -c(\vc{y}_i,\,\vc{y}_j)$ or
$k > -c(\vc{y}_i,\,\vc{y}_j)$.
This implies
\begin{equation*}
\mu\left( \setc{\vc{x} \in A}
{g_{ij}(\vc{x}) = k } \right) = 0
\quad\quad
\forall\, i,\,j \in \N_n,\,i\neq j,
\quad\quad
\forall\, k \in (-\infty,\,-c(\vc{y}_i,\,\vc{y}_j)) 
\cup (c(\vc{y}_i,\,\vc{y}_j),\,\infty).
\end{equation*}

By \cref{t:WhenLpIsMonge}, $\mu(B) = 0$. Thus, for any $i,\,j \in \N_n$, $i 
\neq j$, $\mu(A_{ij}) = 0$.
Since the problem assumes nothing about the probability density $\nu$, it 
must be the case that
\begin{equation*}
\mu\left( \setc{\vc{x} \in A}
{g_{ij}(\vc{x}) = k } \right) = 0
\quad\quad
\forall\, i,\,j \in \N_n,\,i\neq j,
\quad\quad
\forall\, k \in [-c(\vc{y}_i,\,\vc{y}_j),\,c(\vc{y}_i,\,\vc{y}_j)].
\end{equation*}
Therefore,
$\mu\left( \setc{\vc{x} \in A}
{g_{ij}(\vc{x}) = k } \right) = 0$
for all $i,\,j \in \N_n$, $i\neq j$, and for all
$k \in \R$,
and uniqueness follows from Corollary 4 of~\cite{Cuesta1993a}.
\end{proof}

\section{Conclusions}\label{s:concl}
This paper resolves issues of partitioning and uniqueness for 
semi-discrete transport problems using a large class of ground cost functions: 
the $p$-norms.
If the cost function is a $p$-norm with $p \in (1,\,\infty)$, the above 
arguments ensure that $\mu$-a.e.\ unique solutions exist for 
semi-discrete transport problems.
As the examples show, if the cost function is a $p$-norm with $p=1$ or 
$p=\infty$, the solution may or may not constitute a $\mu$-a.e.\ unique 
partition of the continuous space.

\bibliographystyle{plain}
\bibliography{ref.bib}

\begin{thebibliography}{1}

\bibitem{Aurenhammer1991a}
Franz Aurenhammer.
\newblock {Voronoi} diagrams --- {A} survey of fundamental geometric data
  structure.
\newblock {\em ACM Computing Surveys}, 23(3):345--405, 1991.

\bibitem{Cuesta1993a}
Juan~Antonio {Cuesta-Albertos} and Araceli {Tuero-D{\'i}az}.
\newblock A characterization for the solution of the {M}onge-{K}antorovich mass
  transference problem.
\newblock {\em Statistics and Probability Letters}, 16(2):147--152, 1993.

\bibitem{Dieci2017a}
Luca Dieci and {J.D.} {Walsh III}.
\newblock The boundary method for semi-discrete optimal transport partitions
  and {Wasserstein} distance computation.
\newblock Preprint, arXiv:1702.03517 [math.NA], 2017.

\bibitem{Rachev1998a}
Svetlozar Rachev and Ludger R{\"u}schendorf.
\newblock {\em Mass Transportation Problems. Vol I: Theory, Vol II:
  Applications}.
\newblock Probability and its applications. Springer-Verlag, New York, 1998.

\bibitem{Ruschendorf2007a}
Ludger R\"{u}schendorf.
\newblock {Monge}-{Kantorovich} transportation problem and optimal couplings.
\newblock {\em Jahresbericht der Deutschen Mathematiker-Vereinigung},
  109(3):113--137, 2007.

\bibitem{Ruschendorf1997a}
Ludger R\"{u}schendorf and Ludger Uckelmann.
\newblock On optimal multivariate couplings.
\newblock In {\em Distributions with given marginals and moment problems},
  pages 261--273. Kluwer Academic Publishers, 1997.

\bibitem{Ruschendorf2000a}
Ludger R\"{u}schendorf and Ludger Uckelmann.
\newblock Numerical and analytical results for the transportation problem of
  {Monge}-{Kantorovich}.
\newblock {\em Metrika}, 51(3):245--258, 2000.

\end{thebibliography}
\end{document}